\newtheorem{theorem}{Theorem}
\newtheorem{corollary}{Corollary}
\newtheorem{remark}{Remark}
\newtheorem{lemma}{Lemma}
\author{Alina Pavlikova\footnote{\noindent 
UNIGE, Villa Battelle, 1227 Carouge, Suisse\hfill\break\ \ \ \ \ \  kussike(at)gmail.com} }
\title{Critical values of coamoebas\\of codimension two affine planes}
\date{}
\begin{document}
\maketitle
\begin{abstract}Despite some general results about (co)amoebas of half-dimensional varieties and linear spaces (see [1,2,3]), very little is  known about the topological structure of the corresponding critical loci. In this note, we give a preliminary discription of critical values for the coamoeba of a generic affine plane in $\mathbb{C}^4$.
\end{abstract}

Consider a generic affine plane $V$ in $\mathbb{C}^4$.  Denote by $V^\times$ the intersection of $V$ with the algebraic torus $(\mathbb{C}^\times)^4$ (recall that $\mathbb{C}^\times=\mathbb{C}\backslash\{0\}$). There are maps 
$Log:(\mathbb{C}^\times)^4\rightarrow\mathbb{R}^4,\ 2Arg:(\mathbb{C}^\times)^4\rightarrow(\mathbb{R}P^1)^4$ and $Arg:(\mathbb{C}^\times)^4\rightarrow(S^1)^4$. Let $\mathcal{A}$, $\mathcal{B}$ and $\mathcal{C}$ be the restrictions of these maps to $V^\times,$ we call them amoeba, rolled coamoeba and (usual) coamoeba map respectively. Note that their loci of critical points are identical -- denote by $Z$ this locus. Fix the notations $\mathcal{A}_Z$, $\mathcal{B}_Z$, $\mathcal{C}_Z$ for the critical values of $\mathcal{A}$, $\mathcal{B}$, $\mathcal{C}.$

\begin{theorem}
 The space $\mathcal{B}_Z$ is homeomorphic to $\mathbb{R}P^2$ blown-up at four points.
\label{thm_rolledcritical}
\end{theorem}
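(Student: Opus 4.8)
The plan is to strip the problem down to a purely ``phase'' condition and then recognise the resulting surface as the real Deligne--Mumford space $\overline M_{0,5}(\mathbb{R})$.

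First I would make the critical condition explicit. Parametrising $V$ affinely by $s\in\mathbb{C}^2$, the coordinates are affine-linear forms $\ell_i(s)=z_i$ with constant complex gradients $\alpha_i\in(\mathbb{C}^2)^{*}$. Since $d\arg z_i=\mathrm{Im}(\alpha_i/\ell_i)$ and $d\log|z_i|=\mathrm{Re}(\alpha_i/\ell_i)$, a point is critical for $\mathcal{A},\mathcal{B},\mathcal{C}$ simultaneously iff $\sum_i(c_i/\ell_i)\alpha_i=0$ for some real $c\neq 0$; writing $c_i/\ell_i=\rho_i e^{-i\theta_i}$ this becomes $\sum_i\rho_i e^{-i\theta_i}\alpha_i=0$, which depends \emph{only} on $\theta=\mathrm{Arg}(z)$ (this is also why the three critical loci coincide, as the tangent plane $dF(T_zV)$ is complex, hence $i$-invariant). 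Thus $Z=\mathrm{Arg}^{-1}(\Theta)\cap V^\times$, where $\Theta=\{\theta:\det_{\mathbb{R}}(e^{-i\theta_1}\alpha_1,\dots,e^{-i\theta_4}\alpha_4)=0\}$ is a single real hypersurface in $(S^1)^4$; I would expand this determinant through the complex minors $m_{jk}=\det_{\mathbb{C}}(\alpha_j,\alpha_k)$ to obtain a concrete trigonometric equation.

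The decisive point is a dimension collapse. Because $\Theta$ is defined through $\mathrm{Arg}$ alone, the kernel of $d\,\mathrm{Arg}$ at a critical point is automatically tangent to $Z$; equivalently, writing $V=\{z:\sum_i a_{ki}z_i=b_k,\ k=1,2\}$, the modulus system $M(\theta)r=b$ with $M(\theta)r=(\sum_i a_{ki}r_ie^{i\theta_i})_k$ degenerates exactly over $\Theta$ (the vanishing of $\det_{\mathbb{R}}M(\theta)$ agrees with $\Theta$ by the Plücker duality $m^A_{jk}=\pm m_{lm}$), so its positive solutions sweep out a $1$-dimensional fibre inside the $3$-dimensional $Z$. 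Hence $\mathcal{C}|_Z$ has rank $2$ and $\mathcal{B}_Z$ is the closure of the $2$-dimensional set $\Theta_0=\Theta\cap\mathcal{C}(V^\times)$, cut out inside $\Theta$ by the compatibility $b\in\mathrm{im}\,M(\theta)$ together with positivity. Establishing that $\overline{\Theta_0}$ is a closed surface, and that rolling identifies it homeomorphically with its image in $(\mathbb{R}P^1)^4$, is the first thing that must be nailed down.

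Next I would symmetrise: passing to $\bar V\cong\mathbb{P}^2$ turns the four coordinate hyperplanes and the hyperplane at infinity into five general lines $L_0,\dots,L_4$ with phases $\varphi_j=\arg\ell_j$ and $\theta_i=\varphi_i-\varphi_0$. In these variables $\Theta$ is a cross-ratio type relation on $e^{i\varphi_j}$ weighted by the $m_{jk}$ (in a symmetric normalisation it reduces to the harmonic relation $\sin(\varphi_1-\varphi_3)\sin(\varphi_2-\varphi_4)+\sin(\varphi_1-\varphi_4)\sin(\varphi_2-\varphi_3)=0$), and the rolling map sends each $\varphi_j$ to a genuine point $2\varphi_j\in\mathbb{R}P^1$, making the condition projective; this is precisely why the \emph{rolled} coamoeba yields the clean answer. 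This exhibits $\mathcal{B}_Z$ as a space of configurations of five points on $\mathbb{R}P^1$ modulo common rotation, and I would build an explicit homeomorphism $\overline{\Theta_0}\to\overline M_{0,5}(\mathbb{R})$ matching the five-line combinatorics with the marked points and the phase-collision degenerations with the boundary strata. Finally I would invoke the classical identification $\overline M_{0,5}(\mathbb{R})\cong\mathbb{R}P^2$ blown up at four points (the real degree-five del Pezzo), cross-checking through its pentagonal tiling ($15$ vertices, $30$ edges, $12$ pentagons, $\chi=-3$, non-orientable) and the classification of surfaces.

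The hard part will be the realizability analysis of Step three: showing that the positivity constraint $r>0$ and the compactification by colliding phases glue the local pieces into a \emph{closed} surface carrying exactly the four-fold blow-up structure, rather than a surface with boundary or of the wrong genus. Controlling these boundary strata, and proving the rolling map is sufficiently injective on $\Theta_0$, is where the genuine work lies.
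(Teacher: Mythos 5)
Your opening computation is sound: the critical locus of all three maps is indeed cut out by the single phase condition $\det_{\mathbb{R}}(e^{-i\theta_1}\alpha_1,\dots,e^{-i\theta_4}\alpha_4)=0$, and the target space $\overline M_{0,5}(\mathbb{R})\cong\mathbb{R}P^2$ blown up at four points is the right homeomorphism type (it is even secretly the same model as the paper's, since $\overline M_{0,5}$ is classically the universal conic of the pencil through four points, and the paper identifies $\mathcal{B}_Z$ with exactly such a universal curve). But the step on which your whole plan pivots --- that rolling identifies $\overline{\Theta_0}$ homeomorphically with its image $\mathcal{B}_Z$ in $(\mathbb{R}P^1)^4$ --- is not just unproven, it is false. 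Over a critical value $c\in\mathcal{B}_Z$ the fiber of $\mathcal{B}|_Z$ is a circle broken into up to five arcs by the degenerations $x=0$, $y=0$, $x+y=1$, $x+ky=a$ and the passage through infinity; $\mathrm{Arg}$ is locally constant on each arc but jumps by $\pi$ in one (or all four) coordinates at each arc boundary, so the five arcs land on five \emph{distinct} points of $(S^1)^4$, all lying over $c$. Hence $\Theta_0=\mathcal{C}_Z\to\mathcal{B}_Z$ is generically five-to-one, and the compactification of $\mathcal{C}_Z$ is $\mathbb{R}P^2$ blown up at \emph{sixteen} points, with $\chi=-15$ rather than $-3$. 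So no homeomorphism $\overline{\Theta_0}\to\overline M_{0,5}(\mathbb{R})$ exists, and carrying out your Step 3 as written would compute the wrong surface.

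Two further points. First, the realizability analysis you defer to the end is not a technical afterthought: deciding which phase configurations satisfying the determinant equation are actually attained (versus having empty fiber, as happens e.g.\ when $l_0=l=l_1\neq\mathbb{R}$) is the substance of the proof, and the paper does it by explicitly inverting $\mathcal{B}$ through the auxiliary locus $\lambda(l_0,l,l_1)$ and classifying its degenerations. Second, the ``symmetric normalisation'' reducing $\Theta$ to the harmonic relation is not available for a generic $V$: expanding the real determinant gives $\tfrac12\bigl(\mathrm{Re}(e^{-i(\theta_1+\theta_2-\theta_3-\theta_4)}m_{12}\bar m_{34})-\mathrm{Re}(e^{-i(\theta_1+\theta_3-\theta_2-\theta_4)}m_{13}\bar m_{24})+\mathrm{Re}(e^{-i(\theta_1+\theta_4-\theta_2-\theta_3)}m_{14}\bar m_{23})\bigr)$, and while rescaling the $\alpha_i$ lets you normalise the arguments of the three products, the ratios of their moduli are invariants of $V$ and equal the $2:1:1$ needed for the harmonic form only for special planes. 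To salvage the configuration-space idea you must work directly downstairs in $(\mathbb{R}P^1)^4$ and identify $\mathcal{B}_Z$ itself --- not $\Theta_0$ --- with the universal conic over the pencil through the four base points $0,1,a,d$, which is precisely what the paper's map $\phi$ (sending a critical value to the common point $p$ of $l_0$, $l_1+1$, $l_a+a$) accomplishes.
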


\begin{theorem}
There is a compactification $\hat{Z}$ of $Z$ -- a total space of a locally trivial circle bundle over $\mathcal{B}_Z$ with projection map being an extention of $\mathcal{B}|_Z.$ There are five distiguished sections of $\hat{Z}\rightarrow\mathcal{B}_Z$ such that $Z$ is the result of removing these sections from $\hat{Z}.$
\label{thm_five}
\end{theorem}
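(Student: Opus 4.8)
\section*{Proof proposal}

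The plan is to realise the fibres of $\mathcal{B}|_Z$ as real affine lines and to obtain $\hat{Z}$ by completing each such line projectively. Write $V=\{z\in\mathbb{C}^4:Az=c\}$ for a generic $2\times 4$ matrix $A$ and $c\in\mathbb{C}^2$, and for $b\in(\mathbb{R}P^1)^4$ fix phases $\beta_j$ with $b_j=2\beta_j$. A point of $\mathcal{B}^{-1}(b)\cap V$ then has the form $z_j=\rho_j e^{i\beta_j}$ with $\rho=(\rho_1,\dots,\rho_4)\in\mathbb{R}^4$ solving the real system $\tilde{A}\rho=c$, where $\tilde{A}_{ij}=A_{ij}e^{i\beta_j}$; this is two complex, hence four real, equations in the four unknowns $\rho_j$. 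Recall that $z$ is critical exactly when the logarithmic tangent plane $\Pi_z=\mathrm{diag}(z_1^{-1},\dots,z_4^{-1})\,L$ (with $L$ the linear part of $V$) meets $\mathbb{R}^4$ nontrivially, which is precisely the infinitesimal version of the degeneracy of $\tilde{A}$. For generic $b$ the system has rank $4$ and the fibre is a single point, so $\mathcal{B}$ is generically one-to-one; the first step is to identify $\mathcal{B}_Z$ with the locus where $\tilde{A}$ drops to corank one \emph{and} $c$ remains in its image, so that over every $b\in\mathcal{B}_Z$ the solution set is a $1$-dimensional affine line $\ell_b\subset\mathbb{R}^4$. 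Intersecting with $(\mathbb{R}^\times)^4$ then identifies the fibre $\mathcal{B}|_Z^{-1}(b)$ with $\ell_b$ minus the points where some coordinate vanishes.

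Next I would define $\hat{Z}$ fibrewise as the projective closure $\widehat{\ell_b}\cong\mathbb{R}P^1$ of $\ell_b$, setting $\hat{Z}=\bigsqcup_{b\in\mathcal{B}_Z}\widehat{\ell_b}$ with the obvious projection $\pi$ extending $\mathcal{B}|_Z$. Since the $\ell_b$ are the solution lines of a linear system whose coefficients depend real-analytically on $b$ and which has constant corank one over the dense part of the base, the assignment $b\mapsto\ell_b$ is a real-analytic field of lines; trivialising it locally over $\mathcal{B}_Z$ produces local trivialisations of $\pi$ and exhibits $\hat{Z}\to\mathcal{B}_Z$ as a locally trivial $\mathbb{R}P^1$-bundle, i.e.\ a circle bundle. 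Here Theorem~\ref{thm_rolledcritical} is essential: the four exceptional points of $\mathbb{R}P^2$ should record exactly the loci where the corank of $\tilde{A}$ would jump or the line $\ell_b$ degenerates, and it is the blow-up that separates these degenerations so that the field of lines, and hence $\pi$, stays locally trivial across them.

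The five sections then come from the five hyperplanes $H_0,\dots,H_4$, namely the four coordinate hyperplanes $H_j=\{z_j=0\}$ and the hyperplane $H_0$ at infinity. A generic affine line meets each $H_k$ in exactly one point, so $s_k(b):=\widehat{\ell_b}\cap H_k$ is well defined, $b\mapsto s_k(b)$ is a continuous section of $\pi$, and $s_0(b)$ is simply the point at infinity adjoined in the projective closure (reached as $\rho\to\infty$ along $\ell_b$, i.e.\ as $z\to\infty$). Since $z\in Z$ if and only if $z\in V^\times=V\cap(\mathbb{C}^\times)^4$, that is, if and only if $z$ avoids all four coordinate hyperplanes and stays finite, removing $\bigcup_{k=0}^{4}s_k(\mathcal{B}_Z)$ from $\hat{Z}$ returns exactly $Z$; each circle fibre loses its five marked points and breaks into five open arcs, which is the asserted description.

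I expect the main difficulty to lie entirely over the special points of $\mathcal{B}_Z$. Away from them the situation is governed by a corank-one linear system and the argument is routine; the real work is to show that across the four exceptional points the lines $\ell_b$ extend continuously without jumping in corank, becoming tangent to, or falling into one of the $H_k$, that the five sections remain pairwise disjoint there, and that $\pi$ is genuinely locally trivial rather than merely a fibration with circle fibres. This is precisely the geometry that forces the four blow-ups in Theorem~\ref{thm_rolledcritical}, and matching the degenerations of the family $\{\ell_b\}$ with the four exceptional divisors is the crux of the proof.
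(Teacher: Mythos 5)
Your construction is in substance the paper's own: the paper describes the fibre of $\mathcal{B}|_Z$ over $c$ as a projective line of linkage triangles related by homothety centred at $p=\phi(c)$, which is exactly your corank-one solution line $\ell_b$ together with its projective completion, and its five sections $s_0,\ s_p,\ s_1,\ s_a,\ s_\infty$ (the degenerations $x=0$, $y=0$, $x+y=1$, $x+ky=a$, and ``vertex at infinity'') are precisely your intersections of $\widehat{\ell_b}$ with the four coordinate hyperplanes and the hyperplane at infinity. One correction to your list of remaining difficulties: the pairwise disjointness of the five sections is \emph{not} provable because it is false --- the paper shows that every pair of sections coincides along a topological circle in $\mathcal{B}_Z$ (so a fibre of $Z\rightarrow\mathcal{B}_Z$ breaks into three to five arcs, not always five) --- but Theorem~\ref{thm_five} does not need disjointness, since removing the union of the sections returns $Z$ in any case; the coincidence loci only matter later, for Theorem~\ref{thm_ten}.
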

\begin{corollary}
$Z$ is a nonsingular 3-manifold.
\label{cor_3mfld}
\end{corollary}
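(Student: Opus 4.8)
The plan is to deduce the corollary as a formal consequence of Theorem~\ref{thm_five}, using the description of $\mathcal{B}_Z$ from Theorem~\ref{thm_rolledcritical} only to know that the base is a closed surface, and then invoking standard facts about fibre bundles and open submanifolds. First I would record that $\mathcal{B}_Z$ is a closed (compact, boundaryless) topological surface: by Theorem~\ref{thm_rolledcritical} it is $\mathbb{R}P^2$ blown up at four points, i.e.\ the non-orientable surface $\#^5\mathbb{R}P^2$, which in particular is a compact $2$-manifold without boundary. Next, since Theorem~\ref{thm_five} asserts that $\hat{Z}\to\mathcal{B}_Z$ is a \emph{locally trivial} circle bundle, every point of $\hat{Z}$ has a neighbourhood homeomorphic to $U\times S^1$ with $U\subset\mathbb{R}^2$ open. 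Such a product is a chart for a $3$-manifold, so $\hat{Z}$ is a (closed) topological $3$-manifold of dimension $\dim S^1+\dim\mathcal{B}_Z=1+2=3$.

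The second step is to control the complement of the removed sections. Each of the five distinguished sections is the image of a continuous section $s_i\colon\mathcal{B}_Z\to\hat{Z}$ of the bundle. Being the continuous image of the compact space $\mathcal{B}_Z$ inside the Hausdorff space $\hat{Z}$, each $s_i(\mathcal{B}_Z)$ is compact, hence closed in $\hat{Z}$. Their union $\Sigma=\bigcup_{i=1}^{5}s_i(\mathcal{B}_Z)$ is a finite union of closed sets and is therefore closed. By Theorem~\ref{thm_five} we have $Z=\hat{Z}\setminus\Sigma$, so $Z$ is an open subset of the $3$-manifold $\hat{Z}$. An open subset of a topological manifold is again a manifold of the same dimension and carries no singular points, whence $Z$ is a nonsingular $3$-manifold. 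This completes the deduction.

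I do not anticipate a genuine obstacle, since the statement is purely a bookkeeping consequence of Theorem~\ref{thm_five}: the only point that requires a moment's care is that the removed sections form a \emph{closed} subset of $\hat{Z}$, which is exactly what the compactness of $\mathcal{B}_Z$ guarantees, so that the complement is open rather than merely locally closed. I would emphasize that for the bare manifold assertion one needs nothing about how the sections meet one another or about their isotopy type; the full fibrewise geometry from the proof of Theorem~\ref{thm_five} would only be required if one wanted to pin down the homeomorphism type of $Z$ itself, which the corollary does not ask for.
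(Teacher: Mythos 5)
Your deduction is correct and is essentially the same as the paper's, which states the corollary as an immediate consequence of Theorem~\ref{thm_five} without further argument: $\hat{Z}$ is a $3$-manifold as the total space of a locally trivial circle bundle over the surface $\mathcal{B}_Z$, and $Z$ is the open complement of the five closed section images.
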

\begin{theorem}
The projection from $Z$ to $\mathcal{C}_Z$ is a locally trivial fiber bundle with open intervals as its fibers. 
\label{thm_fiber}
\end{theorem}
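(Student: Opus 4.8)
The plan is to leverage Theorem~\ref{thm_five} together with the fact that $\mathcal{C}$ and $\mathcal{B}$ differ only by the covering map $p\colon (S^1)^4\to(\mathbb{R}P^1)^4$, where each factor $S^1\to\mathbb{R}P^1$ is the double covering $\theta\mapsto 2\theta$; thus $\mathcal{B}=p\circ\mathcal{C}$, the map $p$ is a local homeomorphism, its fibres are torsors over the deck group $\mathbb{F}_2^{4}$, and the deck transformations act by translation by $\pi e_j$ in the coordinate circles. First I would restrict attention to a single circle fibre $F_b=\mathcal{B}|_Z^{-1}(b)$ of the bundle $\hat Z\to\mathcal{B}_Z$. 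On $F_b$ the map $\mathcal{B}$ is constant, so $\mathcal{C}(F_b)\subset p^{-1}(b)$, a discrete set; hence $\mathcal{C}$ is locally constant on $F_b$, i.e.\ constant on each connected component (open arc) of $F_b$, the components being the complement in the circle of the (at most five) section points of Theorem~\ref{thm_five}. Consequently every fibre of $\mathcal{C}|_Z$ is a union of such arcs lying inside one fibre of $\mathcal{B}|_Z$, and the whole statement reduces to showing that distinct arcs of a given circle fibre carry distinct values of $\mathcal{C}$.

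Next I would identify the five sections with the five divisors cut on the projective closure $\bar V\cong\mathbb{C}P^{2}$ by the coordinate hyperplanes $\{z_j=0\}$ ($j=1,\dots,4$) and by the hyperplane at infinity, and compute the monodromy of $\mathcal{C}$ across each. As the circle passes through the section point over $\{z_j=0\}$, the coordinate $z_j$ has a simple zero while the others stay in $\mathbb{C}^\times$; continuity of $\mathcal{B}$ forces the two one-sided limits of $\mathcal{C}$ to differ by a deck transformation, and a first-order local computation shows the jump is exactly $\pi e_j$. Across the section at infinity all four $z_j$ blow up proportionally, so their arguments jump together, giving the vector $\pi\mathbf{1}=\pi(e_1+e_2+e_3+e_4)$. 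Hence, reading off the arcs in cyclic order, the successive values of $\mathcal{C}$ are the partial sums $0,\;e_{i_1},\;e_{i_1}+e_{i_2},\dots$ in $\mathbb{F}_2^{4}$ of some arrangement of the five monodromy vectors $e_1,e_2,e_3,e_4,\mathbf{1}$.

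The distinctness is then pure linear algebra over $\mathbb{F}_2$: since $e_1,\dots,e_4$ is a basis and $\mathbf{1}=e_1+\dots+e_4$, the unique vanishing subset-sum of the five vectors is the total sum (which is exactly the return after one full loop). Therefore no proper consecutive block sums to zero, the five partial sums are pairwise distinct, and each fibre of $\mathcal{C}|_Z$ is a single open arc, i.e.\ an open interval. To obtain local triviality I would work over a trivialising chart $U\times S^1$ of $\hat Z\to\mathcal{B}_Z$ on which the five sections are disjoint graphs: there the arcs form five disjoint families of intervals, $\mathcal{C}$ realises the set of arcs as a $5$-sheeted covering of $U$ inside $\mathcal{C}_Z$, and $Z$ over $U$ becomes fibrewise homeomorphic to this covering times $(0,1)$; patching the charts yields the bundle.

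The main obstacle I anticipate is establishing that the five sections are pairwise disjoint over all of $\mathcal{B}_Z$ --- equivalently, that each circle fibre meets each of the five divisors transversally and that no two divisors meet the same point of a fibre. A collision would drop the number of arcs over that base point and destroy local triviality there, so this transversality (which I expect to follow from the genericity of $V$, but which must be made precise) is the crux; the accompanying first-order computation of the jump $\pi e_j$ at each section, and the verification that $\mathcal{C}$ admits well-defined one-sided limits at the section points using the smooth structure of $\hat Z$ from Theorem~\ref{thm_five}, are the supporting technical points.
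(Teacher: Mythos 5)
Your strategy coincides with the paper's: both arguments rest on the circle bundle $\hat Z\to\mathcal{B}_Z$ with its five sections from Theorem~\ref{thm_five}, on the fact that $\mathcal{C}$ is constant on each arc of a fiber circle minus the sections, and on the monodromy jumps $\pi e_j$ across the four coordinate sections and $\pi(e_1+e_2+e_3+e_4)$ across $s_\infty$. Your $\mathbb{F}_2$ subset-sum computation showing that distinct arcs of one fiber carry distinct values of $\mathcal{C}$ is correct, and it makes explicit a point the paper only asserts (Remarks~\ref{rem_five} and~\ref{rem_sixteen}).

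The genuine gap is precisely the step you flag as the crux: the five sections are \emph{not} pairwise disjoint over $\mathcal{B}_Z$, and no genericity assumption on $V$ will make them so. The paper shows (proof of Theorem~\ref{thm_ten}, Figure~\ref{sectionsC}) that every pair of sections coincides along a topological circle in $\mathcal{B}_Z$, ten circles in all; this is why Remark~\ref{rem_five} counts ``three-to-five'' intervals rather than always five, and why $\hat{\mathcal{C}}_Z\setminus\mathcal{C}_Z$ in Theorem~\ref{thm_ten} consists of ten circles. Consequently your trivializing charts ``on which the five sections are disjoint graphs'' do not cover $\mathcal{B}_Z$, and as written your proof of local triviality applies only over the complement of the ten collision circles. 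The theorem nevertheless survives, for a reason your own $\mathbb{F}_2$ computation supplies: when $s_i$ and $s_j$ collide over a point $b_0$, the arc between them disappears, but its limiting $\mathcal{C}$-value differs by a nonzero element of $\mathbb{F}_2^4$ from the values carried by the surviving arcs, hence is not attained by $\mathcal{C}|_Z$ over $b_0$ and is not a point of $\mathcal{C}_Z$ at all --- it lies in $\hat{\mathcal{C}}_Z\setminus\mathcal{C}_Z$. The fibers over the points of $\mathcal{C}_Z$ that do sit above the collision locus are the surviving, non-degenerate arcs, whose endpoints (and hence whose fiberwise parametrizations) still vary continuously with the base point, so local triviality persists there. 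You should therefore replace ``prove the sections are pairwise disjoint'' by this excision argument: the degenerate arcs do not contribute fibers because their would-be base points are removed from $\mathcal{C}_Z$ by definition of the critical value set.
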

\begin{remark}
Removing the five sections from a fiber-circle of $\hat{Z}\rightarrow\mathcal{B}_Z$ we obtain a union of three-to-five intervals, fibers of $Z\rightarrow\mathcal{C}_Z.$ 
\label{rem_five}
\end{remark}
\begin{remark}
The sixteen-fold covering from $(S^1)^4$ to $(\mathbb{R}P^1)^4$ gives a one-to-one correspondense on the regular values of $\mathcal{B}$ and $\mathcal{C}$. Its restriction $\mathcal{C}_Z\rightarrow\mathcal{B}_Z$ is generically five-to-one.
\label{rem_sixteen}
\end{remark}

\begin{theorem}
There is a compactification $\hat{\mathcal{C}}_Z$ of $\mathcal{C}_Z$ homeomorphic to $\mathbb{R}P^2$ blown-up at sixteen points  and  $\hat{\mathcal{C}}_Z\backslash\mathcal{C}_Z$ is a union of ten disjoint topological circles. Moreover, $\mathcal{C}_Z\rightarrow\mathcal{B}_Z$ extends to a five-fold unramified covering $\hat{\mathcal{C}}_Z\rightarrow\mathcal{B}_Z.$
\label{thm_ten}
\end{theorem}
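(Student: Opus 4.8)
The plan is to realise $\hat{\mathcal{C}}_Z$ tautologically as the \emph{space of gaps} of the five sections, and then to pin down its homeomorphism type by a Euler characteristic computation, using that the classification of closed surfaces leaves only one possibility. Throughout I use the circle bundle $\hat{Z}\to\mathcal{B}_Z$ and its five sections $s_1,\dots,s_5$ furnished by Theorem~\ref{thm_five}, together with the interval bundle $Z\to\mathcal{C}_Z$ of Theorem~\ref{thm_fiber}. By the latter, a point of $\mathcal{C}_Z$ lying over $b\in\mathcal{B}_Z$ is exactly a connected component of the fibre circle $\hat{Z}_b$ with the (at most five) points $s_1(b),\dots,s_5(b)$ deleted, i.e.\ a nonempty \emph{gap} between cyclically consecutive sections; this is the content of Remark~\ref{rem_five}.

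I define $\hat{\mathcal{C}}_Z$ to be the space of \emph{all} cyclic gaps of the five sections, now allowing \emph{empty} gaps (two sections momentarily coinciding). Counting gaps with their cyclic multiplicity, every fibre $\hat{Z}_b$ carries exactly five gap-slots, so the tautological projection $\pi\colon\hat{\mathcal{C}}_Z\to\mathcal{B}_Z$ has constant fibre cardinality five. To see that $\pi$ is \emph{unramified} it suffices to work in a local trivialisation $\hat{Z}|_U\cong U\times S^1$, where the sections become maps $\theta_i\colon U\to S^1$, and to examine the discriminant $D_{ij}=\{\,b:\theta_i(b)=\theta_j(b)\,\}$. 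For generic $V$ these crossings are transversal and involve only two sections at a time; across such a crossing the slot lying between $\theta_i$ and $\theta_j$ shrinks to length zero and immediately reopens on the other side, while the remaining four slots vary continuously. Hence each of the five slots is carried continuously across $D$, the map $\pi$ is a local homeomorphism, and the generically five-to-one map $\mathcal{C}_Z\to\mathcal{B}_Z$ of Remark~\ref{rem_sixteen} extends to the asserted five-fold covering $\pi$.

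By construction $\hat{\mathcal{C}}_Z\setminus\mathcal{C}_Z$ is precisely the locus of empty gaps, and an empty gap occurs over $b$ exactly when two adjacent sections coincide there. Each unordered pair $\{i,j\}$ thus contributes the empty-gap locus lying over $D_{ij}$. Assuming the general-position statement that every $D_{ij}$ is a single smoothly embedded circle and that the ten lifts are pairwise disjoint in $\hat{\mathcal{C}}_Z$, we obtain $\binom{5}{2}=10$ disjoint topological circles. Moreover each such circle is approached by nonempty gaps from \emph{both} sides of $D_{ij}$ (a thin gap before, and its reopened copy after the crossing), so the circle is two-sided and interior: adjoining it genuinely \emph{closes up} $\mathcal{C}_Z$. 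Consequently $\hat{\mathcal{C}}_Z$ is a compact surface without boundary.

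Since $\pi$ is an unramified five-fold covering of a closed surface, $\chi(\hat{\mathcal{C}}_Z)=5\,\chi(\mathcal{B}_Z)$. By Theorem~\ref{thm_rolledcritical} the base is $\mathbb{R}P^2$ blown up at four points, whence $\chi(\mathcal{B}_Z)=1-4=-3$ and $\chi(\hat{\mathcal{C}}_Z)=-15$. As $-15$ is odd, $\hat{\mathcal{C}}_Z$ cannot be orientable; being a connected (transitive monodromy of $\pi$) closed surface of Euler characteristic $-15$, the classification of surfaces forces $\hat{\mathcal{C}}_Z\cong N_{17}$, the connected sum of seventeen projective planes, i.e.\ $\mathbb{R}P^2$ blown up at sixteen points. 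Together with the previous paragraph this is the theorem. The genuinely nontrivial input, and the main obstacle, is the general-position analysis of the five sections: that each coincidence locus $D_{ij}$ is one transversal circle, that no triple coincidences or tangencies occur (either would ramify $\pi$ or merge circles), and, most delicately, that $\pi$ remains unramified over the four exceptional loci at which $\mathcal{B}_Z$ was blown up in Theorem~\ref{thm_rolledcritical}, where $\hat{Z}$ and its sections are most degenerate. Verifying these facts requires the explicit equations for $s_1,\dots,s_5$ coming from the logarithmic Gauss map of the generic plane $V$, and it is here that the hypothesis of genericity must be used in full.
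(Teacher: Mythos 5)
Your construction is essentially the paper's own: $\hat{\mathcal{C}}_Z$ is realised as the space of (possibly degenerate) intervals between cyclically consecutive sections of $\hat{Z}\to\mathcal{B}_Z$, the fibre of the extension is the five such slots, and $\hat{\mathcal{C}}_Z\setminus\mathcal{C}_Z$ is the lift of the ten pairwise-coincidence loci. Your Euler-characteristic computation ($\chi(\mathcal{B}_Z)=2-5=-3$, hence $\chi(\hat{\mathcal{C}}_Z)=-15$; oddness forces nonorientability, so the surface is $N_{17}$) is correct and usefully makes explicit the step the paper compresses into ``by classification of topological surfaces.'' The two inputs you defer are exactly what the paper supplies by explicit geometric pictures rather than by the logarithmic Gauss map: that each pair of sections coincides along a single topological circle and no triple has a common point (Figure~\ref{sectionsC}, assembled from the circumscribed circles of Figure~\ref{pointD} together with the exceptional divisors at $0,1,a$ and the line at infinity, along which $s_p$ meets $s_0,s_1,s_a,s_\infty$), and that the monodromy of the five-fold covering acts transitively on a fibre, which the paper reads off from the cyclic order of the five sections along two explicit loops (Figure~\ref{circles}). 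The second point is the one genuine shortfall in your write-up: you invoke transitivity only parenthetically, yet without it the classification argument does not determine $\hat{\mathcal{C}}_Z$ (e.g.\ a disconnected covering with components of smaller genus would have the same total Euler characteristic), so the monodromy computation cannot be omitted. A minor caveat: your claim that each coincidence circle is \emph{two-sided} in $\hat{\mathcal{C}}_Z$ is neither needed nor obviously true in a nonorientable surface; all you need is that every empty gap is a limit of nonempty ones, so that $\mathcal{C}_Z$ is dense and $\hat{\mathcal{C}}_Z$ is closed.
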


\section*{Proof of Theorem \ref{thm_rolledcritical}}
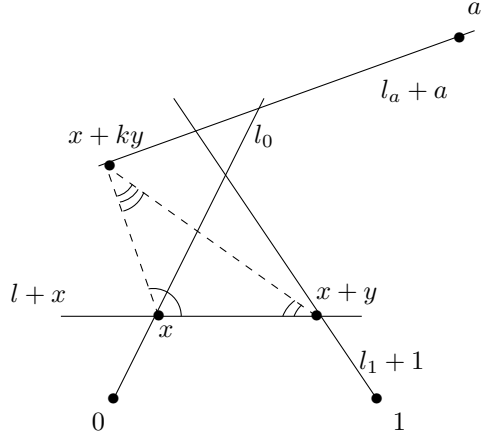
\begin{figure}[htbp]
\centering
\begin{tikzpicture}
\draw(0.2,0)--++(2,4);
\draw(1,4)--++(2.7,-4);
\draw (-0.8,1.4) node {$l+x$};
\draw (0.9,0.9) node {$x$};
\draw (0.8,1.1) node {$\bullet$};

\draw (5,5.2) node {$a$};
\draw (4.2,4.1) node {$l_{a}+a$};
\draw (4.8,4.8) node {$\bullet$};
\draw (5,4.9)--++(-5,-1.8);
	
\draw (4,-0.3) node {$1$};
\draw (3.7,0) node {$\bullet$};
\draw (0,-0.3) node {$0$};
\draw (0.2,0) node {$\bullet$};
\draw [dashed](0.8,1.1)--++(-0.7,2);
\draw [dashed](2.9,1.1)--++(-2.8,2);
\draw (2.9,1.1) node {$\bullet$};
\draw (3.3,1.4) node {$x+y$};
\draw(-0.5,1.1)--++(4,0);
\draw (0.15,3.1) node {$\bullet$};
\draw (0.1,3.5) node {$x+ky$};
\draw (3.9,0.5) node {$l_{1}+1$};
\draw (2.2,3.5) node {$ l_{0}$};

\draw (1.1,1.1) arc (0:100:0.37cm);

\draw (0.25,2.7) arc (-77:-27:0.30cm);
\draw (0.285,2.62) arc (-80:-30:0.35cm);
\draw (0.33,2.5) arc (-75:-20:0.40cm);

\draw (2.6,1.1) arc (160:132:0.38cm);
\draw (2.45,1.1) arc (160:125:0.44cm);

 \end{tikzpicture}

\caption{The rolled coamoeba map $\mathcal{B}(x,y,x+y-1, x+ky-a)=(l_0,l,l_1,l_a).$}
\label{linkage}
\end{figure}

Without loss of generality we assume that $V$ is parametrized as 
$$(x,y) \mapsto (x,y,x+y-1, x+ky-a)$$
for generic $a,k\in\mathbb{C}$. This allows to draw the associated linkage (see Figure \ref{linkage}).

From now on we denote by $(l_0,l,l_1,l_a)$ a point in $(\mathbb{R}P^1)^4,$ where we think of every of the four lines as passing throug the origin in $\mathbb{C}.$ Consider $\mathbb{R}P^2$ as the compactification of $\mathbb{C}$ by the line $\mathbb{R}P^1$ at infinity. 
\begin{lemma}
If $(l_0,l,l_1,l_a)\in \mathcal{B}_Z$ then there exist a unique point $p\in\mathbb{R}P^2$ such that the closures of $l_0, l_1+1, l_a+a \subset \mathbb{C}$ intersect at $p.$ \label{lemma_pointp}
\end{lemma}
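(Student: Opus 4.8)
The plan is to read off the critical locus infinitesimally and then recognize the resulting motion as a homothety, whose center is the desired point $p$. Since $\mathcal{B}$ is a map between two real $4$-manifolds, a point of $V^\times$ lies in $Z$ exactly when $d\mathcal{B}$ has nontrivial kernel, i.e. when there is a nonzero tangent vector $\xi=(\dot x,\dot y)$ annihilated by all four forms $\mathrm{Im}(dz_j/z_j)$, where $z_1=x,\ z_2=y,\ z_3=x+y-1,\ z_4=x+ky-a$ (these have the same critical set as $2\mathrm{Arg}\,z_j$). First I would unwind the four equations $\mathrm{Im}\big(dz_j(\xi)/z_j\big)=0$. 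The first two read $\dot x=\lambda x$ and $\dot y=\mu y$ with $\lambda,\mu\in\mathbb{R}$ not both zero; the last two say that $v_2:=\dot x+\dot y$ and $v_3:=\dot x+k\dot y$ are real multiples of $z_3$ and $z_4$ respectively. Setting $v_1:=\dot x$, this is precisely the statement that the three marked vertices $P_1=x,\ P_2=x+y,\ P_3=x+ky$ of the linkage move with velocities $v_i$ tangent to the three lines $L_1=l_0,\ L_2=l_1+1,\ L_3=l_a+a$ on which they lie.

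The key step is to observe that this infinitesimal motion is a homothety. Using $v_1=\lambda x$, $v_2=\lambda x+\mu y$, $v_3=\lambda x+k\mu y$ together with the affine identities $P_2-P_1=y$ and $P_3-P_1=ky$ (so that $x+ky=(1-k)P_1+kP_2$, coefficients summing to $1$), a direct check shows that for $\mu\neq0$ one has $v_i=\mu\,(P_i-p)$ for every $i$, where $p=\tfrac{\mu-\lambda}{\mu}\,x$. Thus the flex is the infinitesimal scaling centered at $p$, and since $\lambda,\mu$ are real, $p$ is a real multiple of $x$ and hence already lies on $L_1$. When $\mu=0$ the flex is the common translation $v_1=v_2=v_3=\lambda x$, which forces all three lines to be parallel to $x$; I would record this as the case $p\in\mathbb{R}P^1$ at infinity, so that the conclusion is uniform over $\mathbb{R}P^2$.

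From here concurrency is immediate. Each $v_i=\mu(P_i-p)$ is required to be tangent to $L_i$, so $P_i-p$ is parallel to $L_i$; as $P_i\in L_i$ this gives $p\in L_i$ whenever $P_i\neq p$. On the torus $V^\times$ the three vertices are pairwise distinct, since $y$, $ky$ and $(1-k)y$ are nonzero for generic $k$, so at most one of them can coincide with $p$, and the two remaining incidences already place $p$ on all three closures. This produces the point $p\in\mathbb{R}P^2$. For uniqueness I would note that the three lines cannot all coincide: $L_1,L_2,L_3$ pass through $0,1,a$ respectively, and for generic $a\notin\mathbb{R}$ these points are not collinear; hence three concurrent, not-all-equal lines meet in a single point.

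The main obstacle is the middle step — recognizing the criticality flex as a genuine homothety and pinning down its center. The reason it works is the relation $(1-k)+k=1$ in $x+ky=(1-k)x+k(x+y)$: this is exactly what lets a single scaling factor $\mu$ and a single center $p$ satisfy all three velocity constraints simultaneously. Once that identity is exploited the remainder is bookkeeping, but without it the three tangency conditions would not force a common center, and the clean geometric statement of the lemma would fail.
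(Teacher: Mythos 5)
Your proof is correct, but it takes a genuinely different route from the paper's. The paper never linearizes: it first shows that $\mathcal{B}$ is birational by exhibiting an explicit geometric inverse (for each line $L$ parallel to $l$, intersect $L$ with $l_0$ and with $l_1+1$ to get $x(L)$ and $x(L)+y(L)$, and sweep out the locus $\lambda(l_0,l,l_1)$ of points $x(L)+ky(L)$), and then classifies the degenerations of this inversion: a value is critical exactly when the fiber becomes positive-dimensional, which happens when $\lambda(l_0,l,l_1)$ collapses to a point lying on $l_a+a$, or coincides with $l_a+a$, or on the special stratum $l_0=l_1=\mathbb{R}$; in each case the infinite fiber forces the concurrency point $p$. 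You instead compute $\ker d\mathcal{B}$ at a critical point of $V^\times$ and recognize the flex as an infinitesimal homothety $v_i=\mu(P_i-p)$ with real center $p=\tfrac{\mu-\lambda}{\mu}\,x\in l_0$ (or, for $\mu=0$, a translation, giving $p$ on the line at infinity), so that tangency of each $v_i$ to $L_i$ places $p$ on all three closures; your verification of the homothety identity is correct, and since $\mathcal{B}_Z=\mathcal{B}(Z)$ this local argument covers exactly what the lemma claims. Your route is shorter and self-contained --- it does not require establishing birationality first --- whereas the paper's global fiber analysis is reused later: the explicit fibers over $\mathcal{B}_Z$ are what become the circle fibers of Theorem~\ref{thm_five}, and the degeneration loci (e.g.\ the circumscribed-circle condition) feed into the pencil of conics. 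Two small points to tighten: when some $P_i=p$, the incidence $p\in L_i$ comes from $P_i\in L_i$ itself rather than from ``the two remaining incidences''; and since $p\in\mathbb{R}x\subset l_0$ holds for all $(\lambda,\mu)$, the observation that at most one vertex coincides with $p$ is not actually load-bearing for existence, only for keeping the bookkeeping clean.
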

Therefore, we have a map $\phi:\mathcal{B}_Z\rightarrow\mathbb{R}P^2$ assigning the intersection point $p$ to a critical value. It is easy to see that $\phi$ is birational.

\begin{remark}
For any $l \in \mathbb{R}P^1$ the locus $$C_l = \{p \in \mathbb{R}P^2 | \exists (l_0, l, l_1, l_a) \in \mathcal{B}_Z:\, \phi(\ell_0, \ell, \ell_1, \ell_a) = p\}$$ is a conic curve. Moreover, the map $l \in \mathbb{R}P^1 \mapsto C_l \in \mathbb{R}P^5$ is linear.
\end{remark}
\noindent In other words, $\{C_l\}_{l\in{\mathbb{R}P^1}}$ is a pencil of conics. 

\begin{figure}
\includegraphics{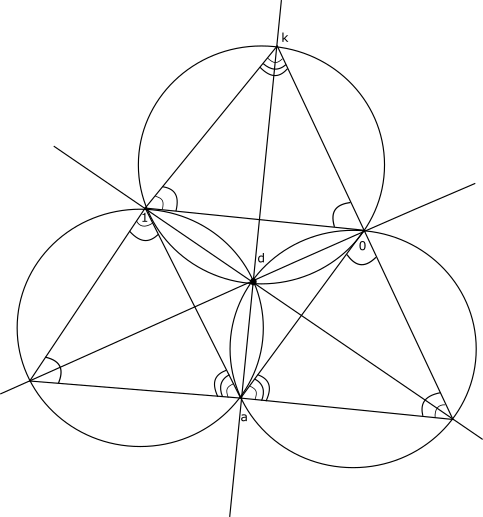}
\caption{Constructing $d,$ the fourth base point of the pencil of conics. Every circle corresponds to a locus of points $p$ where all the triangles in the fiber of the rolled coamoeba map over $\mathcal{B}_Z$ have $p$ as one of the three vertices. For example, the circle passing through $0,1,k$ corresponds to the case when $\lambda(l_0,l,l_1)=\{p\}$ in the notations of the proof of Lemma \ref{lemma_pointp}. Other two circles are obtained by symmetry between points $0,1,a$ (see Reamrk \ref{rem_sym}).}
\label{pointD}
\end{figure}

\begin{remark} The base points of this pencil are $0,\ 1,\ a$ and $d,$ where the fourth point is constructed on Figure \ref{pointD}.
\end{remark}

\noindent Denote by $\tilde{C}=\{(p,l)\in\mathbb{R}P^2\times\mathbb{R}P^1|\,p\in C_l\}$ the universal curve of this pencil.

\begin{remark}
$\tilde{C}$ is a blow-up of $\mathbb{R}P^2=\mathbb{C}\cup\mathbb{R}P^1$ at four points $0,\ 1,\ a,\ d\in\mathbb{C}$.
\end{remark}

\noindent Finally, the map $\phi$ extends to a homeomorphism $\tilde{\phi}:\mathcal{B}_Z \to \tilde{C}$ given by $$\tilde{\phi} (l_0,l,l_1,l_a)=(\phi(l_0,l,l_1,l_a), l).$$

\subsection*{Proof of Lemma \ref{lemma_pointp}}

\begin{lemma}

The map $\mathcal{B}$ is rational.

\end{lemma}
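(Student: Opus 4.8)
The plan is to write $\mathcal{B}$ explicitly in real coordinates and observe that each of its four components is the ratio of two real-linear forms; this is exactly what it means for a map to $(\mathbb{R}P^1)^4$ to be rational. First I would fix coordinates on the parameter plane by setting $x=x_1+ix_2$ and $y=y_1+iy_2$ with $x_1,x_2,y_1,y_2\in\mathbb{R}$, so that the parametrized surface $V$ is identified (as a real algebraic variety) with $\mathbb{R}^4$, and $V^\times$ with the Zariski-dense open complement of the four real-codimension-two loci $\{z_j=0\}$, where $z_1=x$, $z_2=y$, $z_3=x+y-1$, $z_4=x+ky-a$ are the four coordinate functions. Writing $k=k_1+ik_2$ and $a=a_1+ia_2$, each $\operatorname{Re}z_j$ and each $\operatorname{Im}z_j$ is an $\mathbb{R}$-affine function of $(x_1,x_2,y_1,y_2)$, simply because the parametrization is $\mathbb{C}$-affine and taking real or imaginary parts sends $\mathbb{C}$-affine functions to $\mathbb{R}$-affine ones.

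Next I would use the identification of a point of $\mathbb{R}P^1$ with the direction of the line through the origin in $\mathbb{C}=\mathbb{R}^2$ it represents, under which the $j$-th component of $\mathcal{B}$ sends a point to the homogeneous coordinates $[\operatorname{Re}z_j:\operatorname{Im}z_j]$. Since numerator and denominator are $\mathbb{R}$-affine in $(x_1,x_2,y_1,y_2)$, this component is a rational map into $\mathbb{R}P^1$; and on $V^\times$ the pair $(\operatorname{Re}z_j,\operatorname{Im}z_j)$ never vanishes simultaneously --- that is precisely the condition $z_j\neq 0$ cutting out the torus --- so the component is regular there. Assembling the four components yields a map $V^\times\to(\mathbb{R}P^1)^4$ given entirely by ratios of linear forms, which is the desired rationality of $\mathcal{B}$.

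The one point I would treat as the (modest) main obstacle is the bookkeeping around the indeterminacy locus and the genericity of $a,k$: to call each component a genuine rational function I must check that neither $\operatorname{Re}z_j$ nor $\operatorname{Im}z_j$ vanishes identically, so that the ratio is not formally $0/0$, and that the base locus of $\mathcal{B}$ meets $V^\times$ in the expected way. For generic $a,k$ this is automatic, the offending loci being proper real-linear subspaces that are either disjoint from $V^\times$ or of positive codimension. I would close by emphasizing why this lemma is placed first: the explicit ratio-of-linear-forms description is exactly what later allows $\mathcal{B}$, its critical locus $Z$, and the critical values $\mathcal{B}_Z$ to be analyzed as real algebraic objects, and in particular underlies the birational map $\phi$ to $\mathbb{R}P^2$ used in the proof of Lemma \ref{lemma_pointp}.
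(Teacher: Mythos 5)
Your proposal is correct and is essentially the paper's own argument: the paper likewise observes that $2arg(z)=[\mathrm{Re}(z):\mathrm{Im}(z)]$ and that $\mathcal{B}$ is the composition of the coordinatewise $2arg$ with the affine embedding of $V$ into $\mathbb{C}^4$, so each component is a ratio of real-affine forms. Your extra care about the indeterminacy locus and genericity of $a,k$ is fine but goes beyond what the paper records.
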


\begin{proof}

Note that $2arg(z)=[{Re(z)}:{Im(z)}]\in\mathbb{R}P^1$, for a non-zero complex number $z$, and $\mathcal{B}$ is a composition of the coordinatewise $2arg$ and the affine embedding of $V$ to $\mathbb{C}^4$.

\end{proof}

\begin{lemma}

The map $\mathcal{B}$ is birational.

\end{lemma}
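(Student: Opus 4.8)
The plan is to show that a generic point of the target $(\mathbb{R}P^1)^4$ has exactly one preimage under $\mathcal{B}$ and that the resulting inverse assignment is rational; together with the preceding lemma this yields birationality. Throughout I identify $\mathbb{C}$ with $\mathbb{R}^2$ and regard each coordinate $l\in\mathbb{R}P^1$ as the line through the origin whose homogeneous coordinates $[\alpha:\beta]$ encode its direction $(\alpha,\beta)$. The condition that a point $z\in\mathbb{C}$ lies on this line is then the single real bilinear equation $\beta\, Re(z)-\alpha\, Im(z)=0$, which is the key observation making all subsequent constraints linear.

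First I would reconstruct $(x,y)$ from the data $(l_0,l,l_1,l_a)$. Since $2arg(x)=l_0$ forces $x$ to lie on the line $l_0$, I write $x=s\,p_0$ with $s\in\mathbb{R}$ and $p_0$ a direction vector of $l_0$; likewise $y=t\,q$ with $t\in\mathbb{R}$ and $q$ a direction vector of $l$. The two remaining conditions — that $x+y-1$ lie on $l_1$ and that $x+ky-a$ lie on $l_a$ — then become two real affine-linear equations in the scalars $(s,t)$, since $ky=t(kq)$ is still $\mathbb{R}$-linear in $t$. This produces a system $M\,(s,t)^{\mathsf T}=b$, where the entries of the $2\times2$ matrix $M$ and of the vector $b$ are polynomial in the homogeneous coordinates of the four lines and in $a,k$.

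Next I would check that $\det M$ does not vanish identically on $(\mathbb{R}P^1)^4$, so that $M$ is invertible for a generic choice of lines; a single explicit substitution suffices to witness this, and the genericity of $a,k$ guarantees the determinant does not degenerate. For such generic target data the system has a unique solution $(s,t)$, hence a unique point $(x,y)=(s\,p_0,\,t\,q)$, which moreover lies in $V^\times$ (all four coordinates nonvanishing) for generic lines. This already shows $\mathcal{B}$ is dominant with generic fiber a single point. Finally, Cramer's rule expresses $s$ and $t$ as ratios of the polynomials above, so the inverse map $(l_0,l,l_1,l_a)\mapsto(x,y)$ is rational; combined with the previous lemma, $\mathcal{B}$ is birational.

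The main obstacle I anticipate is the nondegeneracy step: confirming that $\det M$ does not vanish identically and identifying the exceptional locus where $M$ drops rank (equivalently, where the fiber fails to be a single point). This is exactly where the genericity hypotheses on $a$ and $k$ enter, and it also connects back to the critical locus, since the breakdown of unique reconstruction is precisely the phenomenon later recorded by $\mathcal{B}_Z$.
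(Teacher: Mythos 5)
Your proposal is correct, and it reaches the same conclusion by a more computational route than the paper. The paper inverts $\mathcal{B}$ synthetically: it parametrizes candidate preimages by the pencil $\mathbb{C}\slash l$ of lines $L$ parallel to $l$, sets $x(L)=L\cap l_0$ and $x(L)+y(L)=L\cap(l_1+1)$ (thus imposing three of the four constraints at once), observes that the locus $\lambda(l_0,l,l_1)=\{x(L)+ky(L)\}$ is generically a line, and intersects it with $l_a+a$ to isolate the unique preimage. You instead impose the $l_0$ and $l$ constraints through the parametrization $x=s\,p_0$, $y=t\,q$ and solve the remaining two conditions as a $2\times 2$ real affine-linear system $M(s,t)^{\mathsf T}=b$. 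Both arguments rest on the same observation — membership in a line through the origin is a real bilinear condition, so the fiber is cut out by affine-linear equations — and both are valid. Your version has the advantage of making the rationality of the inverse explicit via Cramer's rule (the paper leaves this implicit in its construction) and of reducing the nondegeneracy check to $\det M\not\equiv 0$. What it does not buy you is the geometric bookkeeping the paper needs afterwards: the subsequent classification of critical values of $\mathcal{B}$ proceeds by cataloguing exactly how the locus $\lambda(l_0,l,l_1)$ degenerates (empty, a single point on the circle through $0$, $1$, $k$, a line missing $l_a+a$, or a line contained in $l_a+a$), and this is what produces the intersection point $p$ of Lemma \ref{lemma_pointp} and ultimately the pencil of conics. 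In your setup the exceptional locus appears only as $\{\det M=0\}$ together with the vanishing loci of the coordinates, so if you wanted to continue to Theorem \ref{thm_rolledcritical} you would still need to extract the geometric meaning of that determinant.
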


\begin{proof}

For a generic point $(l_0,l,l_1,l_a)\in(\mathbb{R}P^1)^4$ we construct a unique preimage under $\mathcal{B}$. Recall that in this text $\mathbb{R}P^1$ is seen as a real projectivisation of $\mathbb{C}$.

Let $\mathbb{C}\slash l$ be the set of all lines $L\subset\mathbb{C}$ parallel to $l.$ Denote by $x(L)$ the intersection of $L\in \mathbb{C}\slash l$ with $l_0$ and let $x(L)+y(L)$ be the intersection of $L$ with $l_1+1$. Define $\lambda(l_0,l,l_1) \subset\mathbb{C}$ as the locus of points of the form $z(L)=x(L)+ky(L).$ If $l_0,\ l_1$ and $l$ are generic then $\lambda(l_0,l,l_1) \subset\mathbb{C}$ is a line.

The lines $\lambda(l_0,l,l_1)$ intersects $l_a+a$ at $z(\hat L)$ for a unique $\hat L\in\mathbb{C}\slash l.$ The preimage of $(l_0,l,l_1,l_a)$ under $\mathcal{B}$ is $(x(\hat L),y(\hat L))\in V.$\end{proof}

Now we analyze how this invertion procedure can fail if the genericity assumption on $(l_0,l,l_1,l_a)$ is droped. If $l_0=l=l_1\neq\mathbb{R}$ then $\lambda(l_0,l,l_1)$ is empty and $(l_0,l,l_1,l_a)$ has no preimages under $\mathcal{B}$. The point $(\mathbb{R},\mathbb{R},\mathbb{R},l_a)$ has infinetly many preimages for any $l_a\in\mathbb{R}P^1$ and, therefore, is a critical value of $\mathcal{B}$. Points of the form $(\mathbb{R},l,\mathbb{R},l_a)$ for $l\neq\mathbb{R}$ have no preimages under the rolled coamoeba map.

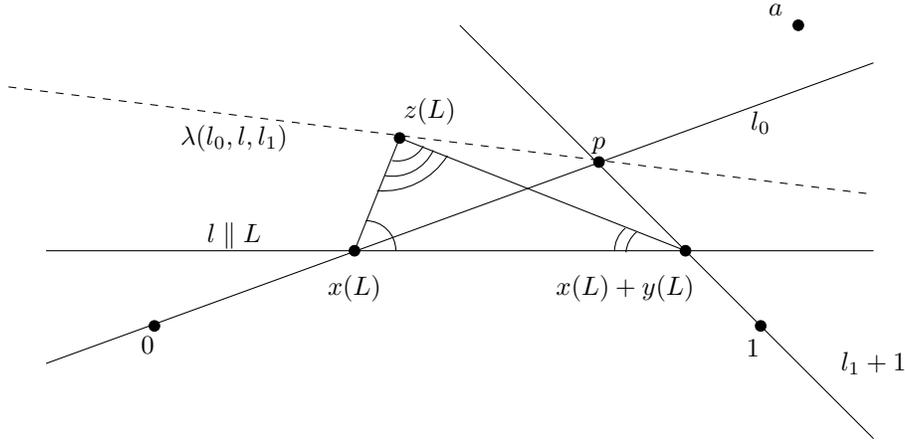
\begin{figure}[h]
\begin{tikzpicture}
\draw (-5.5,0) -- (5.5,0);
\draw (-5.5,-1.5) -- (5.5,2.5);
\draw (0,3) -- (5.5,-2.5);

\draw [dashed](-6,2.18) -- (5.5,0.75);
\draw(-3,1.5)node{$\lambda(l_0,l,l_1)$};

\draw (-1.4,0) -- (-0.8,1.5) -- (3,0);
\draw(-1.4,-0.5) node {$x(L)$};
\draw(2.2,-0.5) node {$x(L)+y(L)$};
\draw(-0.4,1.85) node {$z(L)$};
\filldraw  (1.85,1.4) node{$p$};

\draw(4,1.7) node {$l_0$};
\draw(5.5,-1.5) node {$l_1+1$};
\draw(-3,0.2) node {$l\parallel L$};
\draw(-4.15,-1.25) node {$0$};
\draw(3.9,-1.3) node {$1$};
\draw(4.2,3.2) node {$a$};

\filldraw  (-1.4,0) circle (2pt);
\filldraw  (4.5,3) circle (2pt);
\filldraw  (-4.06,-1) circle (2pt);

\filldraw  (4,-1) circle (2pt);

\filldraw  (3,0) circle (2pt);

\filldraw  (1.85,1.18) circle (2pt);

\filldraw  (-0.8,1.5) circle (2pt);

\draw (-0.85,0) arc (0:90:0.4cm);

\draw (-0.9,1.2) arc (-100:-30:0.4cm);
\draw (-1,1) arc (-100:-25:0.6cm);
\draw (-1.1,0.8) arc (-95:-32:1cm);

\draw (2.2,0.3) arc (130:180:0.4cm);
\draw (2.35,0.25) arc (130:175:0.4cm);

\end{tikzpicture}
\caption{The locus $\lambda(l_0,l,l_1)=\{z(L)=x(L)+ky(L): L \in\mathbb{C}\slash l\}$. }
\label{lambda}
\end{figure}

If $l_0$ and $l_1+1$ intersect at a single point $p\in\mathbb{C}$ then the locus $\lambda(l_0,l,l_1)$ is either $\{p\}$ or a line. The former happens only if $p$ belongs to the circle circumscribing the triangle with vertices $0$, $1$ and $k.$ If $\lambda(l_0,l,l_1)$ has no intersections with $l_a+a$ then $(l_0,l,l_1,l_a)$ is not a value of $\mathcal{B}.$ If $\lambda(l_0,l,l_1)\subset l_a+a$ then $(l_0,l,l_1,l_a)$ has infinetly many preimages.

Finally, for $l\neq l_0=l_1\neq \mathbb{R}$ the locus $\lambda(l_0,l,l_1)$ is a line parallel to $l_0.$ If this line intersects $l_a+a$ at a unique point then $(l_0,l,l_1,l_a)$ is a regular value, if $\lambda(l_0,l,l_1)\cap (l_a+a)=\emptyset,$ it is not a value, and if $\lambda(l_0,l,l_1)=l_a+a,$ it is a critical value (with infinetly many preimages).

\begin{corollary}
For $(l_0,l,l_1,l_a)\in\mathcal{B}_Z$ the lines $l_0,$ $l_1+1$ and $l_a$ intersect at a single point or parallel.
\end{corollary}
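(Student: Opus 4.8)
The plan is to read the statement off the degeneration analysis just carried out for the inversion of $\mathcal{B}$. Because $\mathcal{B}$ is birational, its unique-preimage construction exhibits it as a local diffeomorphism at every value where the construction goes through, so a value belongs to $\mathcal{B}_Z$ precisely when the construction breaks down into a positive-dimensional fiber. First I would record that the configurations with infinitely many preimages isolated above are exactly three: the point $(\mathbb{R},\mathbb{R},\mathbb{R},l_a)$; the configurations with $l_0\cap(l_1+1)=\{p\}\subset\mathbb{C}$ and $\lambda(l_0,l,l_1)\subset l_a+a$; and the configurations with $l\neq l_0=l_1\neq\mathbb{R}$ and $\lambda(l_0,l,l_1)=l_a+a$. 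It then suffices to verify, case by case, that the closures of $l_0$, $l_1+1$ and $l_a+a$ are concurrent in $\mathbb{R}P^2$, that is, meet at a single point of $\mathbb{C}$ or are parallel.

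For the first family, $l_0=\mathbb{R}$ and $l_1=\mathbb{R}$ force $l_0$ and $l_1+1$ to coincide with the real axis, which the third line $l_a+a$ meets in a single point when $l_a\neq\mathbb{R}$ and is parallel to when $l_a=\mathbb{R}$ (here I use that $a$ is generic, so $\mathrm{Im}\,a\neq 0$). For the second family the crux is the observation that the intersection point $p$ of $l_0$ and $l_1+1$ always lies on $\lambda(l_0,l,l_1)$: taking $L\in\mathbb{C}/l$ to be the line through $p$ gives $x(L)=x(L)+y(L)=p$, whence $y(L)=0$ and $z(L)=x(L)+ky(L)=p$. Since the defining condition of this family is $\lambda(l_0,l,l_1)\subset l_a+a$, it follows that $p\in l_a+a$, so all three lines pass through the single point $p$. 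For the third family $l_0=l_1\neq\mathbb{R}$ makes $l_0$ and $l_1+1$ parallel and distinct, while $\lambda(l_0,l,l_1)$ is a line parallel to $l_0$; the condition $\lambda(l_0,l,l_1)=l_a+a$ then forces $l_a+a$ parallel to $l_0$ as well, so all three lines are parallel.

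The main obstacle I anticipate is the reduction step rather than the three verifications: one must be sure that $\mathcal{B}_Z$ contains no critical value beyond these three infinite-fiber families, that is, that no finite-fiber, fold-type critical values are hidden in the boundary of the image. I would settle this using birationality together with generic smoothness: the degree of $\mathcal{B}$ being one rules out colliding sheets, so the only way $d\mathcal{B}$ can drop rank along the image is through a degenerating fiber, which is exactly what the three families above describe. A dimension count against the two-dimensional answer of Theorem \ref{thm_rolledcritical} then confirms that these families already exhaust $\mathcal{B}_Z$. The genuinely geometric content, by contrast, is the elementary incidence $p\in\lambda(l_0,l,l_1)$ of the second case, which propagates concurrency from $l_0\cap(l_1+1)$ to the third line.
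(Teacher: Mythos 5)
Your proposal is correct and follows essentially the same route as the paper, which offers no separate argument for the corollary but reads it off the preceding case-by-case degeneration analysis of the inversion procedure; your explicit verification that $p\in\lambda(l_0,l,l_1)$ (take $L$ through $p$, so $y(L)=0$ and $z(L)=x(L)+ky(L)=p$) is precisely the incidence the paper uses implicitly, visible in Figure \ref{lambda}. The only loose point is the claim that degree one by itself excludes finite-fiber critical values (compare $x\mapsto x^3$); the cleaner justification, also implicit in the paper, is that wherever the explicit rational inverse is defined it gives a smooth local section of $\mathcal{B}$, forcing $d\mathcal{B}$ to be surjective at the corresponding preimage, so critical points can only lie over the enumerated degenerate configurations.
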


\begin{remark}In the complete analogy with what we just did with respect to a pair of points $0$ and $1$ (by means of the locus $\lambda(l_0,l,l_a)$ and its study), the inversion construction and the analysis of its degeneration can be performed with respect to a pair of $1$ and $a$ or a pair of $a$ and $0.$ 
\label{rem_sym}
\end{remark}

\section*{Proof of Theorems \ref{thm_five} and \ref{thm_fiber}}
Lets construct the compactification of $Z$ as the total space of a circle bundle over $\mathcal{B}_Z.$ Note that the fibers over $c\in\mathcal{B}_Z$ of the projection $\mathcal{B}|_\mathcal{Z}$ consist of linkage configurations with triangles spanned by vertices $x,x+y,x+ky\in \mathbb{R}P^2,$ where all such triangles are related through homotety with center at $p=\phi(c)$ if $p\in\mathbb{C}$ or through the parallel transport along $p\in\mathbb{R}P^1$ otherwise. In particular, the fiber is a projective line of triangles besides at most 5 special triangles, that correspond to degenerate linkage configurations when either
\begin{itemize}
\item $x=0$,
\item $y=0,$ i.e. the triangle degenerates to $p$,
\item $x+y=1$,
\item $x+ky=a$,
\item $x$, $x+y,$ or $x+ky\in \mathbb{R}P^1,$ i.e. the triangle has points at infinity.
\end{itemize}
\noindent We denote the corresponding sections by $s_0,\ s_p,\ s_1,\ s_a$ and $s_\infty.$ While going along the circle-fiber, crossing one of the first four sections changes the argument of the corresponding coordinate to the opposite one. Passing through $s_\infty$ changes arguments of all four coordinates.

\section*{Proof of Theorem \ref{thm_ten}}

On Figure \ref{sectionsC} we plot the loci where the five sections $s_0,\ s_p,\ s_1,\ s_a$ and $s_\infty$ pairwise coincide on $\mathcal{B}_Z$ (represented as $\mathbb{R}P^2$ blown-up at $0,\ 1,\ a,$ and $d$).

\begin{remark}
Every pair of sections coincides along a topological circle and very triple has no common intersections.
\end{remark}

Therefore, there exist a unique compactification $\hat{\mathcal{C}}_Z$ of $\mathcal{C}_Z$ to which the the projection  $\mathcal{C}_Z\rightarrow\mathcal{B}$ extends as an unramified five-fold covering. The fibers are represented by five intervals (some of which can be degenerate if a pair of sections concides over a given point) between the five sections of $\hat{Z}\rightarrow\mathcal{B}_Z$.
\begin{figure}[h!]
\includegraphics{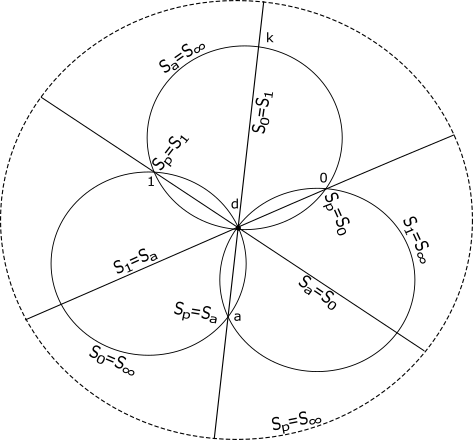}
\caption{ Ten loci where pairs of the five sections coincide. Circles and lines are constructed as in Figure \ref{pointD}. Note that $s_p$ coincides with $s_0$, $s_1$, $s_a $ along the exceptional divisors at $0$, $1$, $a$ respectively and with $s_\infty$ along the line at infinity.}
\label{sectionsC}
\end{figure}

Computing monodromies we deduce that  $\hat{\mathcal{C}}_Z$ is connected (see Figure \ref{circles} for more details). Thus,  $\hat{\mathcal{C}}_Z$ is homeomorphic to $\mathbb{R}P^2$ blown-up at sixteen points by classification of topological surfaces.
\begin{figure}
\includegraphics{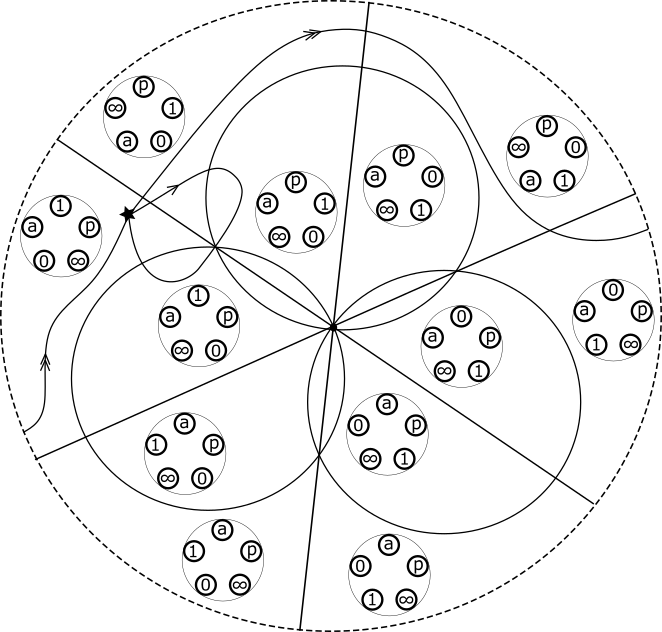}
\caption{A schematic diagram for the ciclic orders of  sections $s_0,\ s_p,\ s_1,\ s_a$ and $s_\infty$ derived from and compatible with Figure \ref{sectionsC}. Here we abuse the notation for a section replacing $s_\bullet$ by $\bullet$ inscribed into a small circle. The star represents the base point for two loops on $\mathcal{B}_Z$. A group generated by monodromies along these loops acts transitively on the fiber of the covering $\hat{\mathcal{C}}_Z\rightarrow\mathcal{B}_Z$ at the base point, implying that $\hat{\mathcal{C}}_Z$ is connected. }
\label{circles}
\end{figure}
\newpage
\section*{References}
[1] Natal'ya Bushueva, Avgust Tsikh, ``On amoebas of algebraic sets of higher codimension'', Proceedings of the Steklov Institute of Mathematics 279.1, 2012 \hfill\break
[2] Grigory Mikhalkin, ``Amoebas of half-dimensional varieties'', Analysis Meets Geometry. Birkhäuser, Cham, 2017 \hfill\break
[3] Mounir Nisse, Mikael Passare, ``Amoebas and coamoebas of linear spaces'', Analysis Meets Geometry. Birkhäuser, Cham, 2017\hfill\break
\end{document}